    \setlist{nosep}
    \pgfplotsset{compat=newest}
\let\oldeqref\eqref
\renewcommand{\eqref}[1]{\oldeqref{eq:#1}}
\mathchardef\ordinarycolon\mathcode`\:
\DeclareMathAlphabet{\altmathcal}{OMS}{cmsy}{m}{n}
\renewcommand{\mathcal}{\altmathcal}
\newcommand{\norm}[1]{\Vert #1 \Vert}
\newcommand{\RR}{\mathbb{R}}
\newcommand{\MM}{\mathcal{M}}
\newcommand{\und}{~~ \text{and} ~~}
\newcommand{\Uad}{\mathcal{U}_{\mathrm{ad}}}
\newcommand{\inv}{^{-1}}
\newcommand{\tp}{^{\mathsf{T}}}
\renewcommand{\tilde}{\widetilde}
\DeclareMathOperator*{\ran}{ran}
\DeclareMathOperator*{\dist}{dist}
\DeclareMathOperator*{\unpp}{unpp}
\newcommand{\ddt}{\frac{\mathrm{d}}{\mathrm{d}t}}
\newcommand{\ddx}{\frac{\mathrm{d}}{\mathrm{d}x}}
\newcommand{\dt}{~\mathrm{d}t}
\newcommand{\pH}{\textsf{pH}\xspace}
\newcommand{\portHamiltonian}{port-Ha\-mil\-to\-ni\-an\xspace}
\renewcommand{\vec}[1]{\begin{bmatrix} #1 \end{bmatrix}}
\pgfplotsset{
  custom y log style/.style={
      yticklabel={
        \pgfkeys{/pgf/fpu=true}
        \pgfmathparse{exp(\tick)}%
        \pgfmathprintnumber[fixed,fixed zerofill, precision=2]{\pgfmathresult}
        \pgfkeys{/pgf/fpu=false}
      }
  }
}
\definecolor{mylinkcolor}{rgb}{0.031,0.2,0.369}
\definecolor{myemphcolor}{rgb}{0.2,0.2,0.2}
\title{Manifold turnpikes of nonlinear \portHamiltonian descriptor systems under minimal energy supply}
\author{Attila Karsai\thanks{Institute of Mathematics, Technische Universität Berlin, Germany. \texttt{karsai@math.tu-berlin.de}}}
\date{\today}
\theoremstyle{plain}
\newtheorem{theorem}{Theorem}
\newtheorem{definition}[theorem]{Definition}
\newtheorem{example}[theorem]{Example}
\newtheorem{lemma}[theorem]{Lemma}
\newtheorem{proposition}[theorem]{Proposition}
\newtheorem{remark}[theorem]{Remark}
\newtheorem{assumption}[theorem]{Assumption}
\begin{document}

\maketitle

\begin{abstract}
    Turnpike phenomena of nonlinear \portHamiltonian descriptor systems under minimal energy supply are studied.
    Under assumptions on the smoothness of the system nonlinearities, it is shown that the optimal control problem is dissipative with respect to a manifold.
    Then, under controllability assumptions, it is shown that the optimal control problem exhibits a manifold turnpike property.
\end{abstract}

\emph{Keywords: turnpike phenomenon, nonlinear systems, \portHamiltonian systems}

\section{Introduction}

This paper is concerned with \emph{turnpike phenomena}.
These phenomena were first noticed in the context of economics~\cite{DorSS87,McK63} and have since been observed in many different situations, see, e.g.,~\cite{CarHL91,Zas06,TreZ18} and the references therein.
Usually, turnpike phenomena are studied in optimal control problems, where the goal is the minimization of a cost functional $C(u)$.
Here, the function $u$ acts as the control of a system of interest. 
In many cases, it can be observed that an optimal control~$u^*$ will, for a majority of the time horizon, steer the associated state trajectory~$x^*$ to a point~\cite{PorZ13,PorZ16,TreZ15}, a set~\cite{Zas06,TreZ18,RapC04} or, as in our case, a manifold~\cite{FauFO22}.
In other words, optimal solutions depend mainly on the underlying system and the optimization objective and are more or less independent on the choice of the time horizon and other data, such as initial or final values.
The behaviour is reminiscent of an observation from daily life:
when traveling a long distance by car, it is usually faster to take a detour via a turnpike than to choose a more direct way on slower streets.
Also, the chosen turnpike usually does not heavily depend on the start and end of the route.
If one would start the journey a few blocks away, then the fastest path would remain more or less the same.

Here, we consider a special class of systems called \emph{\portHamiltonian} (\pH) systems.
Parts of the origins of \portHamiltonian systems date back to the late 1950s~\cite{Pay61}, and the interested reader is referred to~\cite{VanJ14} for an overview on the origins of this system class.
Despite their long history, they continue to be the focus of active research~\cite{MosLYY,LamH22,SchM22,BreHU22,GerH22,MehU22}.
Arguably, the key feature of \pH systems is their modeling perspective: they focus on taking energy as the \emph{lingua franca} between subsystems.
As a consequence, the class of \pH systems is a promising class for modeling real world processes~\cite{MehU22}.
Benefits of \portHamiltonian models include inherent stability and passivity, the invariance under Galerkin projection and congruence transformation, and the possibility to interconnect multiple~\pH systems in a structure-preserving manner.

When \pH systems are considered in an optimal control setting, the objective of minimizing the supplied energy is quite natural.
This results in a cost term of the form $\int_0^T y(t)\tp u(t) \dt$, where $y$ is a collocated observation of the system, and renders the corresponding optimal control problem singular.
In~\cite{SchPFWM21,FauMP22}, the authors have considered this objective for linear time invariant \portHamiltonian (descriptor) systems.
They have shown that the optimal control problem has a measure turnpike property with respect to the dissipative part of the state space, given by the kernel of the matrix corresponding to the non-conservative system dynamics.
The infinite-dimensional linear case was discussed in~\cite{PhiSF21}.
In this paper, we are concerned with the finite-dimensional nonlinear case.
We show that, under smoothness assumptions on the nonlinearities and controllability assumptions on the system, nonlinear \pH descriptor systems admit a turnpike phenomenon with respect to a submanifold of $\RR^n$.
This submanifold corresponds, as in the linear case, to the energy dissipating part of the state space.

The structure of this paper is as follows.
In \Cref{sec:preliminaries}, we recall the definition of \portHamiltonian systems and precisely state the optimal control problem that is considered.
After that, a short repetition of results on submanifolds of $\RR^n$ follows in \Cref{sec:manifolds}.
In \Cref{sec:dissip}, we define manifold dissipativity and manifold turnpikes following~\cite{FauFO22} and recall that, under weak assumptions, manifold dissipativity implies a manifold turnpike property.
\Cref{sec:app-ph} contains the main results of this work, where the previously established results are applied to finite-dimensional nonlinear \portHamiltonian descriptor systems.
The theoretical results are then illustrated by a numerical example in \Cref{sec:num}.
Finally, in \Cref{sec:conclusion} a conclusion is drawn and an outlook on future research is given.

\subsubsection*{Notation}
For a set $Z \subseteq \RR^n$ we define $Z^\circ$ as the interior of $Z$.
We denote the Euclidean norm by $\norm{\cdot}$ and define the distance of a point $x\in\RR^n$ to the set $\MM\subseteq\RR^n$ as $\dist(x,\MM) := \inf_{p \in \MM} \norm{\hbox{$x-p$}}$.
We denote the set of all $k$-times continuously differentiable functions from $U$ to $V$ by $C^k(U,V)$ and define $C(U,V) := C^0(U,V)$.
When the spaces $U$ and $V$ are clear from context, we say that $f\in C^k(U,V)$ is of class $C^k$.
The derivative of a function~$f$ at point $x$ is denoted by $Df_x$.
Further, for a matrix $A\in\RR^{n,n}$ we write $A \succeq 0$ if $x\tp A x \geq 0$ for all $x\in \RR^{n}$, and $A \succ 0$ if $x\tp A x > 0$ for all $x\in\RR^{n}\setminus\{0\}$, where $\cdot\tp$ denotes the transpose.
The kernel and range of the matrix~$A$ are denoted by $\ker(A)$ and $\ran(A)$, respectively.
The non-negative square-root of a positive semidefinite matrix $A$ is denoted by $A^{1/2}$.
Often, we surpress the time dependency of functions and write $z$ instead of~$z(t)$.

\section{Preliminaries and problem setting}\label{sec:preliminaries}

Following the definiton of~\cite{MehU22}, we consider nonlinear \portHamiltonian descriptor systems of the form
\begin{equation}\label{eq:nlph}
    \begin{aligned}
        E(x) \dot{x} = &~ (J(x) - R(x))\eta(x) + B(x) u, \\
        y = &~ B(x)\tp \eta(x).
    \end{aligned}
    \tag{\pH}
\end{equation}
Here, $x$, $u$ and $y$ are the state, input and output of the system, respectively.
We restrict our analysis to \pH systems without feedthrough but note that the discussion can be extended to systems with a feedthrough term as introduced in~\cite{MehU22}.
We consider the state space $\mathcal{X} = \RR^n$ and a set of admissible controls $\Uad$ and require that
\begin{equation*}
    E,J, R \in C(\mathcal{X}, \RR^{n,n}),~~ \eta \in C(\mathcal{X},\RR^n) \und B \in C(\mathcal{X},\RR^{n,m}).
\end{equation*}
Further, the functions $J$ and $R$ have to satisfy $J(x) = - J(x)\tp$ and $R(x) = R(x)\tp \succeq 0$ for all $x \in \mathcal{X}$.
We assume that the system~\eqref{nlph} is associated with a Hamiltonian $\mathcal{H} \in C^1(\mathcal{X},\RR)$ which is bounded from below along any solution of~\eqref{nlph} and satisfies
\begin{equation*}
    \ddx \mathcal{H}(x) = E(x)\tp \eta(x)
\end{equation*}
for each $x\in\mathcal{X}$.
Without loss of generality, as in~\cite{MehU22} we may assume that $\mathcal{H}$ is nonnegative along any solution of~\eqref{nlph}.

\begin{remark}\label{rem:ph-time}
    We only consider \pH systems which do not explicitly depend on time.
    The definition given here can be generalized to include explicit time dependence, but these systems can easily be made autonomous~\cite[Remark 4.2]{MehU22}.
\end{remark}

When it comes to the optimal control of \portHamiltonian systems, the cost functional should take into account that \pH systems stem from using energy as the lingua franca.
Hence, choosing the supplied energy as the optimization objective is quite natural.
For this, usually the impedance supply $y\tp u$ is considered, which is related to the scattering supply $\norm{u}^2 - \norm{y}^2$ via the Cayley transform~\cite{Sta02}.
We focus on the former and thus consider the optimal control problem
\begin{equation}\label{eq:ocp}
    \left.
    \begin{aligned}
        \min_{u \in \Uad}~C_{\pH,T}(u) := & \int_{0}^{T} y\tp u \dt \\
        \text{subject to the dynamics~\eqref{nlph} and}\hspace{-2.1cm} \\
        x(0) = x_0, ~~ x(T) = x_T. \hspace{-1.3cm}\\
    \end{aligned}
    \tag{$\text{\pH OCP}_T$}
    \hspace{.5cm}
    \right\}
\end{equation}
Here and in the following, we assume $x_0, x_T \in K$, where $K \subseteq \RR^n$ is a compact set.
Further, as we are interested in the properties of optimal solutions to~\eqref{ocp}, throughout the paper we assume that an optimal solution~$u^*$ and a corresponding trajectory $x^*$ exist.
This assumption is quite restrictive and may be violated.
The existence and uniqueness of optimal controls for nonlinear differential-algebraic equations is studied in, e.g.,~\cite{KunM08}.

It can be shown~\cite{MehU22} that the \emph{power-balance} equation
\begin{equation}\label{eq:powerbalance}
    \ddt \mathcal{H}(x) = -\eta(x)\tp R(x) \eta(x) + y\tp u
\end{equation}
holds along any solution $x$ of~\eqref{nlph}.
This allows us to rewrite the cost functional $C_{\pH,T}(u)$ as
\begin{equation*}
    C_{\pH,T}(u) = \mathcal{H}(x_T) - \mathcal{H}(x_0) + \int_{0}^{T}\norm{R(x)^{1/2} \eta(x)}^2 \dt.
\end{equation*}
This equation is called the \emph{energy-balance} equation, and we can interpret each of the terms physically~\cite{OrtVM02}.
The term $\mathcal{H}(x_T) - \mathcal{H}(x_0)$ measures the conserved energy, while the integral term corresponds to the dissipated energy.
By rearranging and plugging in the definiton of $C_{\pH,T}$, we see that
\begin{equation*}
    \mathcal{H}(x_T) - \mathcal{H}(x_0) = \int_{0}^{T}y\tp u - \norm{R(x)^{1/2}\eta(x)}^2 \dt.
\end{equation*}
Note that this implies dissipativity in the sense of Willems~\cite{Wil72a}, and as a consequence shows the aforementioned passivity of \pH systems.
We will use both of these equations in \Cref{sec:app-ph}.

\section{Submanifolds of $\RR^n$ and the orthogonal projection} \label{sec:manifolds}

This section repeats mostly well-known results regarding submanifolds of $\RR^n$, with a focus on manifolds defined as the zero locus of some smooth function.
The main result of the section is \Cref{lem:dist}, which provides an upper bound for the distance of a point to such a manifold.

We begin with recalling the classical defintion of submanifolds of $\RR^n$.
Following~\cite{DudH94,LeoS21}, we distinguish manifolds whose tangent spaces locally satisfy a Lipschitz condition.

\begin{definition}[submanifolds of $\RR^n$]\hphantom{abc}

    \begin{itemize}

    \item
    Let $\MM$ be a subset of $\RR^n$.
    We call $\MM$ an \emph{$s$-dimensional $C^k$ manifold} if
    for each $p\in \MM$ there exists an open neighborhood $U$ of $p$ and a $C^k$ diffeomorphism $\phi: U \to \phi(U)\subseteq \RR^n$ such that
    \begin{equation*}
        \MM \cap U = \{ x\in U ~|~ \phi_{s+1}(x) = \dots = \phi_n(x) =0 \}.
    \end{equation*}
    The function $\phi$ is called a \emph{local coordinate system of $\MM$ at $p$}.

    \item
    Let $\MM \subseteq \RR^n$ be an $s$-dimensional manifold, $p\in \MM$ and let $\phi: U \to \RR^n$ be a local coordinate system of $\MM$ at $p$.
    We define the \emph{tangent space at $p$ relative to $\MM$} as
    \begin{equation*}
        T_p\MM := D\phi_{\phi(p)}\inv(\left\{ y\in\RR^n ~\middle|~ y_{s+1} = \dots = y_n = 0 \right\}).
    \end{equation*}
    The space $N_p\MM := T_p\MM^\perp$ is called the \emph{normal space at $p$ relative to $\MM$}.

    \item
    We call $\MM\subseteq \RR^n$ an \emph{$s$-dimensional $C^{k,1}$ manifold} if $\MM$ is an $s$-di\-men\-sion\-al $C^k$ manifold and for all $p \in \MM$ there exists a set $V\subseteq \MM$ that is open relative to $\MM$ and a positive constant $L>0$ such that $p \in V$ and for all ${\tilde{p}} \in V$ it holds that
    \begin{equation*}
        d_{\mathrm{H}}(T_p\MM, T_{\tilde{p}}\MM) \leq L \norm{p - {\tilde{p}}}.
    \end{equation*}
    Here, $d_{\mathrm{H}}$ denotes the \emph{Hausdorff distance} defined by
    \begin{equation*}
        d_{\mathrm{H}}(T_1, T_2) := \sup \left\{ \inf \left\{ \norm{t_2-t_1} ~\middle|~  t_2 \in T_2 \cap S \right\} ~\middle|~ t_1 \in T_1 \cap S \right\},
    \end{equation*}
    where $S := \left\{ z \in \RR^n ~\middle|~ \norm{z} = 1 \right\}$ is the unit sphere.
    \end{itemize}
\end{definition}

Next, we recall the definition of the orthogonal projection on a manifold from~\cite{LeoS21}.
For this, consider a manifold $\MM\subseteq \RR^n$ and define the set of points with the \emph{unique nearest points property} as
\begin{equation*}
    \unpp(\MM) := \left\{ x\in\RR^n ~\middle|~ \text{there exists a unique}~\xi\in\MM ~\text{with}~ \dist(x,\MM) = \norm{x-\xi} \right\}.
\end{equation*}
Clearly, for each $x\in\unpp(\MM)$ there exists a unique $p(x)\in\MM$ with the property 
\begin{equation*}
    \norm{x-p(x)} = \dist(x,\MM) = \inf_{p \in \MM} \norm{x-p}.
\end{equation*}

\begin{definition}[orthogonal projection on manifold,~\cite{LeoS21}]
    Let $\MM\subseteq \RR^n$ be a manifold.
    The function $p: \unpp(\MM) \to \MM, ~ x \mapsto p(x)$ is called the \emph{orthogonal projection on $\MM$}.
\end{definition}

Often, we write $p_x$ instead of $p(x)$.
The maximal open set on which the orthogonal projection is defined plays a special role in~\cite{LeoS21} and also in our setting.
We will refer to this set as $\mathcal{E}(\MM) := \unpp(\MM)^\circ$.

The next proposition collects selected results on submanifolds of $\RR^n$ defined in a particular manner and will be useful in \Cref{sec:app-ph}, where it will allow us to study the optimal control problem~\eqref{ocp}.

\begin{proposition}\label{prop:manifolds}
    Suppose $f:\RR^n \to \RR^n$ is of class $C^2$, assume that
    \begin{equation*}
        \MM := \left\{ x\in\RR^n ~\middle|~ f(x) = 0 \right\}
    \end{equation*}
    is nonempty and assume that there exists an open neighborhood $G\subseteq \RR^n$ of $\MM$ such that for all $x\in G$ it holds that $\dim(\ker(Df_x)) = s$, where the constant $s$ is independent of $x$ and satisfies $0 < s < n$.
    Then
    \begin{enumerate}[(i)]
        \item the set $\mathcal{M}$ is an $s$-dimensional $C^2$ submanifold of $\RR^n$,
        \item the tangent space at $p \in\MM$ is given by $T_p\MM = \ker(Df_p)$,
        \item the manifold $\mathcal{M}$ is a $C^{2,1}$ manifold, and
        \item it holds that $\MM \subseteq \mathcal{E}(\MM)$, and if $x\in\mathcal{E}(\MM)\setminus\MM$ then $x-p_x \perp T_{p_x}\MM$.
    \end{enumerate}
\end{proposition}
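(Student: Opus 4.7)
The plan is to handle the four assertions in order, exploiting the constant-rank assumption as the unifying hypothesis. The overall idea is that constant rank yields, via the constant rank theorem, a clean local coordinate description of $\MM$; everything else then follows either by elementary chain-rule manipulations or by invoking properties of the cited framework in~\cite{LeoS21}.

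For (i), I would fix $p\in\MM$ and apply the constant rank theorem to $f$ on the open set $G$, using $\rank(Df_x)=n-s$ there. This produces $C^2$ diffeomorphisms that straighten $f$ locally to a coordinate projection onto $\RR^{n-s}\times\{0\}$. Composing with a suitable permutation of coordinates so that the vanishing components are exactly $\phi_{s+1},\dots,\phi_n$, one obtains a chart matching the definition of an $s$-dimensional $C^2$ manifold. For (ii), I would take any $C^1$ curve $\gamma:(-\varepsilon,\varepsilon)\to\MM$ with $\gamma(0)=p$ and differentiate $f\circ\gamma\equiv 0$ at $0$, getting $Df_p\,\gamma'(0)=0$; this shows $T_p\MM\subseteq\ker(Df_p)$. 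Since both subspaces have dimension $s$ (the left side by the chart from (i), the right side by the constant-rank assumption), they coincide.

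For (iii), the task is to bound the Hausdorff distance of $T_p\MM$ and $T_{\tilde p}\MM$ by $L\norm{p-\tilde p}$ for nearby $p,\tilde p\in\MM$. Using (ii) I would identify these tangent spaces with $\ker(Df_p)$ and $\ker(Df_{\tilde p})$ and work with their orthogonal projectors. Because $f$ is $C^2$, the map $x\mapsto Df_x$ is $C^1$ and hence locally Lipschitz. Since the rank is locally constant, the orthogonal projector onto $\ker(Df_x)$ depends Lipschitz-continuously on $x$ in a neighborhood of $p$; this can be made explicit by writing the projector as $I-Df_x\tp(Df_x Df_x\tp)^{-1}Df_x$ on a neighborhood where $Df_x$ has full row rank in appropriate local coordinates (or by using the constant-rank normal form from (i) to build a smooth orthonormal basis of the kernel). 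A Lipschitz bound on the projector immediately gives the required bound on $d_{\mathrm{H}}$ restricted to the unit sphere.

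For (iv), the inclusion $\MM\subseteq\mathcal{E}(\MM)$ is exactly the tubular-neighborhood statement for $C^{2,1}$ manifolds established in~\cite{LeoS21}: since by (iii) $\MM$ is $C^{2,1}$, each $p\in\MM$ admits an open neighborhood on which the nearest-point projection is well-defined and single-valued, and this neighborhood lies in $\unpp(\MM)^\circ=\mathcal{E}(\MM)$. The orthogonality claim is then the standard first-order necessary condition: for $x\in\mathcal{E}(\MM)\setminus\MM$ and any $C^1$ curve $\gamma$ in $\MM$ with $\gamma(0)=p_x$, the function $t\mapsto\norm{x-\gamma(t)}^2$ attains a local minimum at $t=0$, so differentiating gives $\scal{x-p_x}{\gamma'(0)}=0$; varying $\gamma$ covers all of $T_{p_x}\MM$, which is exactly $x-p_x\perp T_{p_x}\MM$.

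The main obstacle is part (iii); parts (i), (ii), and (iv) reduce quickly either to the constant rank theorem, the chain rule, or a direct citation of~\cite{LeoS21}. In (iii) one has to argue that the Lipschitz regularity of $Df$ passes to Lipschitz regularity of the kernels in Hausdorff distance, and the cleanest route is via an explicit Lipschitz formula for the orthogonal projectors onto those kernels under the constant-rank hypothesis.
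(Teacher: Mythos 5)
Your argument for (i), (ii) and (iv) is essentially the paper's: the constant rank theorem yields the straightening chart (the paper, like you, needs an implicit permutation to match the sign convention $\phi_{s+1}=\dots=\phi_n=0$ in the definition), the identification $T_p\MM=\ker(Df_p)$ follows from killing the tangent directions with $Df_p$ plus a dimension count (you differentiate curves, the paper pushes the chart through $Df_p$ --- same content), and (iv) is in both cases delegated to~\cite{LeoS21} together with the standard first-order optimality argument for the orthogonality. The genuine divergence is in (iii): the paper simply cites~\cite{DudH94} for the general fact that every $C^k$ manifold with $k\geq 2$ is automatically $C^{k,1}$, whereas you give a self-contained argument through the Lipschitz dependence of the orthogonal projector onto $\ker(Df_x)$. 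Your route is sound --- under the constant-rank hypothesis one can locally extract $n-s$ rows of $Df_x$ that stay linearly independent, whose common kernel equals $\ker(Df_x)$ by a dimension count, so the full-row-rank projector formula applies and is $C^1$, hence locally Lipschitz, in $x$; a Lipschitz bound on the projectors then controls $d_{\mathrm{H}}$ on the unit sphere up to a harmless constant factor. What your approach buys is independence from the external reference and an argument tailored to manifolds given as constant-rank zero loci; what the paper's citation buys is generality (the $C^{k,1}$ property holds for any $C^2$ manifold, with no constant-rank structure needed) and brevity. Both are valid proofs of the proposition.
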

\begin{proof} \hphantom{a}
\begin{enumerate}[(i)]
    \item
    Let $p\in \MM$. 
    It can be shown~\cite{Wal09} that there exist open neighborhoods $U_1\subseteq \RR^n$ of $p$ and $U_2\subseteq \RR^n$ of $f(p)$ and $C^2$ diffeomorphisms
    \begin{equation*}
        \phi: U_1 \to \phi(U_1)~~\und~~ \psi: U_2 \to \psi(U_2)
    \end{equation*}
    such that
    \begin{equation*}
        f(U_1) \subseteq U_2
    \end{equation*}
    and
    \begin{equation*}
        \psi \circ f \circ \phi\inv (y_{1},\dots,y_n) = (y_1,\dots,y_{n-s}, 0,\dots,0)
    \end{equation*}
    for all $y\in \phi(U_1)$.
    Thus, for each $v \in U_1$ we have
    \begin{align*}
        v \in \MM
    & \Longleftrightarrow  f(v) = 0 \\
    & \Longleftrightarrow  (\psi\circ f)(v) = 0 \\
    & \Longleftrightarrow  (\psi\circ f \circ \phi\inv \circ \phi)(v) = 0 \\
    & \Longleftrightarrow  \phi_1(v) = \dots = \phi_{n-s}(v) = 0.
    \end{align*}

    \item
    Suppose that $p\in \MM$ and that $\phi: U \to \RR^n$ is a local coordinate system of $\MM$ at $p$.
    Since
    \begin{equation*}
        (f\circ\phi\inv) \big(\{ y\in\RR^n ~|~ y_{s+1} = \dots = y_n = 0 \} \cap \phi(U)\big) = \{0\},
    \end{equation*}
    we have
    \begin{equation*}
        Df_p(T_p\MM) = Df_p\Big(D\phi_{\phi(p)}\inv\big(\left\{ y\in\RR^n ~\middle|~ y_{s+1} = \dots = y_n = 0 \right\}\big)\Big) = 0.
    \end{equation*}
    The claim then follows from the fact that $\ker(Df_p)$ has dimension $s$.

    \item
    In~\cite[Equations (3.3) and (3.6)]{DudH94}, it is shown that a $C^k$ manifold with $k\geq 2$ is also a $C^{k,1}$ manifold, from which the claim follows.
    
    \item
    This claim was proven in~\cite{LeoS21}.\qedhere
\end{enumerate}
\end{proof}

We finish this section with \Cref{lem:dist} and a corresponding remark.
The lemma establishes an upper bound on the distance to the manifold $\MM$ defined in \Cref{prop:manifolds} in terms of the function $f$.
This result will be the key in our application to \portHamiltonian systems, as it will allow us to deduce a dissipativity property for~\eqref{ocp}.

\begin{lemma}\label{lem:dist}
    Suppose $f:\RR^n \to \RR^n$ is of class $C^2$, assume that
    \begin{equation*}
        \MM := \left\{ x\in\RR^n ~\middle|~ f(x) = 0 \right\}
    \end{equation*}
    is nonempty and assume that there exists an open neighborhood $G\subseteq \RR^n$ of $\MM$ such that for all $x\in G$ it holds that $\dim(\ker(Df_x)) = s$, where the constant $s$ is independent of $x$ and satisfies $0 < s < n$.
    Further, assume that for each $x\in G$ the smallest nonzero singular value of $Df_x$ is bounded from below by $\tilde{c} > 0$.
    Then $\MM$ is a $C^{2}$ manifold and there exists an open set $V \subseteq \RR^n$ and a constant $c>0$ with
    $\MM \subseteq V \subseteq \mathcal{E}(\MM)$
    and
    \begin{equation}\label{eq:fdist}
        c \dist(x,\MM) \leq \norm{f(x)}
    \end{equation}
    for all $x\in V$.
\end{lemma}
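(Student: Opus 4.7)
The plan is to use the orthogonality property of the nearest-point projection onto $\MM$ (granted by \Cref{prop:manifolds}) together with a first-order Taylor expansion of $f$ at $p_x$; the assumed singular value bound will turn the linear term into a lower bound of the form $\tilde c\,\norm{x-p_x}$, while the quadratic remainder can be absorbed once $x$ is close enough to $\MM$.

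First I would invoke \Cref{prop:manifolds} to conclude that $\MM$ is an $s$-dimensional $C^2$ submanifold of $\RR^n$, that $\MM \subseteq \mathcal{E}(\MM)$, that $T_p \MM = \ker(Df_p)$ for all $p \in \MM$, and that for any $x \in \mathcal{E}(\MM) \setminus \MM$ the displacement $x - p_x$ lies in $(T_{p_x}\MM)^{\perp} = \ker(Df_{p_x})^{\perp}$. Then, for $x \in \mathcal{E}(\MM) \cap G$ I would write the second-order Taylor expansion of $f$ at $p_x \in \MM$. Since $f(p_x) = 0$, it takes the form
\begin{equation*}
    f(x) = Df_{p_x}(x - p_x) + R(x,p_x), \qquad \norm{R(x,p_x)} \le \tfrac{1}{2}\, M(p_x,x)\, \norm{x-p_x}^2,
\end{equation*}
where $M(p_x,x)$ is any upper bound on $\norm{D^2 f_\xi}$ along the segment $[p_x,x]$, which exists and is locally bounded because $f$ is $C^2$.

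Next, because $x - p_x$ is orthogonal to $\ker(Df_{p_x})$, the minimax characterization of singular values yields
\begin{equation*}
    \norm{Df_{p_x}(x-p_x)} \ge \tilde c\, \norm{x-p_x}.
\end{equation*}
Combining this with the Taylor estimate and the reverse triangle inequality gives
\begin{equation*}
    \norm{f(x)} \ge \tilde c\, \norm{x-p_x} - \tfrac{1}{2}\, M(p_x,x)\, \norm{x-p_x}^2,
\end{equation*}
so as soon as $\norm{x-p_x} \le \tilde c / M(p_x,x)$, we obtain $\norm{f(x)} \ge (\tilde c / 2) \norm{x-p_x} = (\tilde c / 2) \dist(x,\MM)$, and the desired constant is $c := \tilde c / 2$.

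It remains to turn this local statement into an open set $V$ with $\MM \subseteq V \subseteq \mathcal{E}(\MM)$ on which the inequality holds with one common constant. For each $p \in \MM$ I would pick a ball $B(p, r_p) \subseteq G \cap \mathcal{E}(\MM)$ on which $\norm{D^2 f}$ is bounded by some $M_p < \infty$ (possible since $f \in C^2$ and $G \cap \mathcal{E}(\MM)$ is open). Using continuity of the metric projection $x \mapsto p_x$ on $\mathcal{E}(\MM)$ I can then shrink $r_p$ further so that any $x \in B(p, r_p)$ has $p_x \in B(p, r_p)$ and $\norm{x-p_x} < \tilde c / M_p$. The union $V := \bigcup_{p \in \MM} B(p, r_p)$ is open, contains $\MM$, is contained in $\mathcal{E}(\MM)$, and the local argument above applies pointwise on $V$ with the uniform constant $c = \tilde c / 2$, giving $\eqref{fdist}$.

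The main obstacle is the last, global step: the uniform positivity of $c$ on an open neighborhood of all of $\MM$ requires uniform control of the Taylor remainder despite $M(p_x,x)$ varying with $p_x$. This is precisely what the singular-value hypothesis together with the covering construction above is designed to handle — the constant $c = \tilde c/2$ is already uniform, and only the thickness of the tubular neighborhood $V$ needs to be shrunk locally to accommodate the local second derivative bounds. Once this is in place, no further uniformity is needed and the lemma follows.
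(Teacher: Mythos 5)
Your proposal is correct and follows essentially the same route as the paper: Taylor expansion of $f$ at the projection point $p_x$, orthogonality of $x-p_x$ to $T_{p_x}\MM=\ker(Df_{p_x})$, the lower bound $\norm{Df_{p_x}(x-p_x)}\geq \tilde c\,\norm{x-p_x}$ from the singular value assumption, and absorption of the remainder to get $c=\tilde c/2$. The only (immaterial) difference is in how the neighborhood $V$ is built — you use a quantitative $C^2$ remainder bound and a covering of $\MM$ by balls, whereas the paper uses the qualitative $o(\norm{x-p_x})$ remainder and takes $V$ as a sublevel set of a continuous quotient function.
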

\begin{proof}
    We will first show that~\eqref{fdist} is true locally.
    Fix a point $p\in\MM$ and notice that due to $f\in C^2$, for all $x \in \RR^n$ we have
    \begin{equation}\label{eq:ftaylor}
        f(x) = f(p) + Df_{p}(x-p) + g_{p}(x-p),
    \end{equation}
    where the remainder $g_p$ satisfies 
    \begin{equation}\label{eq:remainder}
        \lim_{x\to p}\frac{\norm{g_p(x-p)}}{\norm{x-p}} = 0.
    \end{equation}
    To establish~\eqref{fdist} locally, our first goal is to show that there exists an open set $U_p \subseteq \mathcal{E}(\MM)$ such that for all $x\in V_p := U_p \cap N_p\MM$ we have 
    \begin{equation}\label{eq:gdurchd}
        \frac{\norm{g_{p}(x-p)}}{\norm{Df_{p}(x-p)}} < \frac12.
    \end{equation}
    For the sake of simplicity, let us set $A_p := Df_{p}\tp Df_{p}$.
    Then, by the definition of $\MM$ and \Cref{prop:manifolds}, we have
    $
        \ker(A_p) = \ker(Df_{p}) = T_{p}\MM.
    $
    Now, let us decompose $\RR^n = \ker(A_p) \oplus \ran(A_p) = T_{p}\MM \oplus N_{p}\MM$ and accordingly also $A_p = 0 \oplus A_p^{(2)}$.
    Since $A_p^{(2)}$ is symmetric positive definite, for any $x\in N_p\MM$ we obtain
    \begin{equation}\label{eq:lambda_min_A2}
        \norm{Df_{p}(x-{p})}^2 = (x-{p})\tp A_p (x-{p}) = (x-{p})\tp A_p^{(2)} (x-{p}) \geq \lambda_{\min}(A_p^{(2)}) \norm{x-{p}}^2.
    \end{equation}
    Now, using~\eqref{remainder} and~\eqref{lambda_min_A2}, we obtain 
    \begin{equation*}
        0 = \lim_{x\to p}\frac{\norm{g_p(x-p)}}{\norm{x-p}} \geq \lim_{x\to p} c_p \frac{\norm{g_p(x-p)}}{\norm{Df_p(x-p)}} \geq 0,
    \end{equation*}
    where we set $c_p:=\lambda_{\min}(A_p^{(2)})^{1/2} > 0$.
    In particular, we have 
    \begin{equation*}
        \lim_{x \to p} \frac{\norm{g_p(x-p)}}{\norm{Df_p(x-p)}} = 0.
    \end{equation*}
    Thus, choosing $x \in N_p\MM$ sufficiently close to $p$ we obtain the estimate~\eqref{gdurchd}, and we can deduce that an open set $U_p$ with the sought-after properties has to exist.
    Now, since~\eqref{ftaylor} and $p\in \MM$ implies
    \begin{equation*}
        \norm{Df_{p}(x-{p})} \leq \norm{f(x)} + \norm{g_{p}(x-{p})}\quad \text{for all} ~~ x \in \RR^n,
    \end{equation*}
    using~\eqref{gdurchd} we obtain
    \begin{equation}\label{eq:12absch}
        \tfrac12 \norm{Df_{p}(x-p)} \leq \norm{f(x)} \quad \text{for all} ~~ x \in V_p = U_p \cap N_p\MM.
    \end{equation}
    To finish the local argument, notice that by~\eqref{lambda_min_A2} we have 
    \begin{equation*}
        c_p \dist(x,\MM) = c_p \norm{x-p} \leq \norm{Df_p(x-p)} \quad \text{for all} ~~ x \in V_p,
    \end{equation*}
    which together with~\eqref{12absch} shows that~\eqref{fdist} holds for all $x\in V_p$.

    To construct the set $V$, first notice that the differentiability of $f$ implies that an expression of the form~\eqref{ftaylor} is possible on the set $\mathcal{E}(\MM)$.
    In other words, there exists a function~$g_{\cdot}(\cdot)$ such that
    \begin{equation*}
        f(x) = f(p_x) + Df_{p_x}(x-p_x) + g_{p_x}(x-p_x)
    \end{equation*}
    for all $x\in\mathcal{E}(\MM)$.
    Since the orthogonal projection $x\mapsto p_x$ is differentiable~\cite{LeoS21} and $f\in C^2$, the map $x \mapsto g_{p_x}(x-p_x)$ is continuous on $\mathcal{E}(\MM)$.
    Define the function
    \begin{equation*}
        h: \mathcal{E}(\MM) \to \RR, ~ x \mapsto \frac{\norm{g_{p_x}(x-p_x)}}{\norm{Df_{p_x}(x-p_x)}}.
    \end{equation*}
    Then $h$ is continuous and hence the preimage of the open set $(-\infty,\tfrac12)\subseteq \RR$ under $h$ is open.
    Note that $\MM$ is a subset of this preimage.
    Define
    \begin{equation*}
        \MM \subseteq V := h\inv\big((-\infty,\tfrac12)\big) \subseteq \mathcal{E}(\MM).
    \end{equation*}
    Then for each $p\in \MM$ we have
    \begin{equation*}
        V_p \subseteq V \cap N_p\MM.
    \end{equation*}
    The previous arguments can then be used to show that for $c := \tfrac{\tilde{c}}{2}$ and $x \in V$ we have
    \begin{equation*}
        2c \dist(x,\MM) \leq c_{p_x} \dist(x,\MM) \leq \norm{Df_{p_x}(x-{p_x})} \leq 2 \norm{f(x)},
    \end{equation*}
    finishing the proof.
\end{proof}

\begin{remark}\label{rem:ineq}
    The estimate~\eqref{fdist} is related to the {\L}ojasiewicz inequality~\cite{Loj59,JiKS92}, which states that for a real analytic function $g: U \to \RR$ defined on an open set $U\subseteq \RR^n$ and a compact set $K \subseteq U$, the distance of $x \in K$ to the zero locus $\mathcal{Z} := \{ z \in U ~|~ g(z) = 0 \}$ of $g$ may be estimated by 
    \begin{equation*}
        \dist(x, \mathcal{Z})^\alpha \leq C~|g(x)|,
    \end{equation*}
    where $\alpha$ and $C$ are positive constants.
\end{remark}

\section{Manifold dissipativity and manifold turnpikes}
\label{sec:dissip}

In this section, we recall the definition of dissipativity with respect to a manifold and the definition of manifold turnpikes as introduced in~\cite{FauFO22}.
Further, a theorem relating the two properties is stated.
Here and in the following, the set $\mathcal{K}$ is defined as
\begin{equation*}
    \mathcal{K} := \{ \alpha: [0,\infty) \to [0,\infty) ~|~ \alpha(0)=0, ~ \alpha \text{ is continuous and strictly increasing}\}.
\end{equation*}

We consider the general optimal control problem
\begin{equation}\label{eq:gen-ocp}
    \left.
    \begin{aligned}
        \min_{u \in \mathcal{U}}~C_T(u) := & \int_{0}^{T} \ell(x,u) \dt \\
        \text{subject to} \qquad & \\
        h(x) \dot{x} = &~ g(x,u), \\
        x(0) = x_0, \quad x(T) = x_T.\hspace{-1.95cm}
    \end{aligned}
    \tag{$\text{OCP}_T$}
    \right\}
\end{equation}
As before, we assume $x_0, x_T \in K$, where $K \subseteq \RR^n$ is a compact set.
Here, the function $g$ defines the dynamics of the system and the function $h$ corresponds to possible algebraic constraints.
We refrain from further specification of these functions as~\eqref{gen-ocp} is only used for general definitions.
Throughout this section, we assume that an optimal control~$u^*$ of~\eqref{gen-ocp} and an associated trajectory $x^*$ exist.

We begin with the definition of manifold dissipativity.
The definition is related to Willems' notion of dissipativity~\cite{Wil72a} and is also found in~\cite{FauFO22}.

\begin{definition}[manifold dissipativity]\label{def:dissip}
    Consider the optimal control problem~\eqref{gen-ocp} together with the manifold $\MM\subseteq\RR^n$.
    We say that~\eqref{gen-ocp} is \emph{dissipative with respect to the manifold $\MM$} if there exists a function $\mathcal{S}: \RR^n \to [0,\infty)$ that is bounded on compact sets and a function $\alpha\in\mathcal{K}$ such that all optimal controls $u^*$ and associated trajectories $x^*$ satisfy the dissipation inequality
    \begin{equation}\label{eq:gen-dissip}
        \mathcal{S}(x_T) - \mathcal{S}(x_0) \leq \int_{0}^{T} \ell(x^*,u^*) - \alpha(\dist(x^*,\MM)) \dt
    \end{equation}
    for all $T >0$.
\end{definition}

The function $\mathcal{S}$ from \Cref{def:dissip} is also called storage function.
Note that we require the dissipation inequality~\eqref{gen-dissip} only to hold along optimal solutions of~\eqref{gen-ocp}.
This is not a severe restriction when turnpike phenomona are studied, as we are only interested in properties of optimal solutions.

Next, we define a manifold turnpike property, again following~\cite{FauFO22}.
The property is essentially a notion of measure turnpikes, see, e.g.,~\cite{CarHL91,TreZ18,FauKJB17,Zas06}.

\begin{definition}[manifold turnpike]
    Consider the optimal control problem~\eqref{gen-ocp}
    together with the manifold $\MM\subseteq\RR^n$.
    We say that~\eqref{gen-ocp} has the \emph{manifold turnpike property with respect to the manifold $\MM$} if for all compact sets $K\subseteq \RR^n$ and all $\varepsilon>0$ there exists a constant $C_{K,\varepsilon} > 0$ such that for all $T>0$ all optimal trajectories $x^*$ of~\eqref{gen-ocp} satisfy
    \begin{equation*}
        \lambda\big(\{ t\in[0,T] ~|~ \dist(x^*(t),\MM) > \varepsilon\}\big) \leq C_{K,\varepsilon}
    \end{equation*}
    for all $x_0,x_T \in K$.
    Here, $\lambda$ denotes the Lebesgue-measure.
\end{definition}

The next theorem can be found similarly in~\cite{FauFO22,FauKJB17,GruG21}.
The theorem shows that manifold dissipativity implies a manifold turnpike property.

\begin{theorem}[manifold dissipativity implies manifold turnpike]\label{thm:diss->turn}
    Consider the optimal control problem~\eqref{gen-ocp} together with a submanifold $\MM \subseteq \RR^n$ and assume that
    \begin{enumerate}[(i)]
        \item \label{as:i-diss-turn} there exists a constant $C_\ell(K)>0$ such that for all optimal controls $u^*$ of~\eqref{gen-ocp} and the associated trajectories $x^*$ we have
        \begin{equation*}
            \int_0^T \ell(x^*,u^*) \dt < C_\ell(K)
        \end{equation*}
        for all $T > 0$, and
        \item the optimal control problem is dissipative with respect to the manifold $\MM$.
    \end{enumerate}
    Then the optimal control problem~\eqref{gen-ocp} has the manifold turnpike property.
\end{theorem}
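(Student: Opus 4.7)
The plan is to rearrange the dissipation inequality so that the integral of $\alpha(\dist(x^*,\MM))$ is bounded uniformly in $T$, and then use a Chebyshev-style argument to convert this integral bound into a measure bound on the set of times where the trajectory is far from $\MM$.

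First I would take an optimal control $u^*$ and an associated trajectory $x^*$ and apply the dissipation inequality from \Cref{def:dissip} to obtain
\begin{equation*}
    \int_0^T \alpha(\dist(x^*,\MM)) \dt \leq \int_0^T \ell(x^*,u^*) \dt + \mathcal{S}(x_0) - \mathcal{S}(x_T).
\end{equation*}
Since $\mathcal{S}$ takes values in $[0,\infty)$, the term $-\mathcal{S}(x_T)$ is non-positive and can be dropped. Since $\mathcal{S}$ is bounded on compact sets and $x_0 \in K$, there exists a constant $C_{\mathcal{S}}(K) := \sup_{x\in K} \mathcal{S}(x) < \infty$. Combined with assumption~\eqref{as:i-diss-turn}, this yields the uniform bound
\begin{equation*}
    \int_0^T \alpha(\dist(x^*,\MM)) \dt \leq C_\ell(K) + C_{\mathcal{S}}(K) =: C(K)
\end{equation*}
for all $T>0$.

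Next, fix $\varepsilon > 0$ and define the exceedance set $A_\varepsilon := \{ t \in [0,T] ~|~ \dist(x^*(t),\MM) > \varepsilon \}$. Since $\alpha \in \mathcal{K}$ is strictly increasing, $\dist(x^*(t),\MM) > \varepsilon$ implies $\alpha(\dist(x^*(t),\MM)) > \alpha(\varepsilon) > 0$ on $A_\varepsilon$. Since the integrand is non-negative, we may restrict the integral and estimate
\begin{equation*}
    \alpha(\varepsilon)\,\lambda(A_\varepsilon) \leq \int_{A_\varepsilon} \alpha(\dist(x^*,\MM)) \dt \leq \int_0^T \alpha(\dist(x^*,\MM)) \dt \leq C(K).
\end{equation*}
Setting $C_{K,\varepsilon} := C(K)/\alpha(\varepsilon)$, which is finite and independent of $T$ and of the particular choice of $x_0,x_T\in K$, gives the desired bound $\lambda(A_\varepsilon) \leq C_{K,\varepsilon}$, establishing the manifold turnpike property.

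The argument is essentially bookkeeping on the dissipation inequality, so I do not anticipate a genuine obstacle; the only point that requires a moment of care is ensuring that $C_{K,\varepsilon}$ is genuinely independent of $T$ and of the endpoints within $K$, which is where the non-negativity of $\mathcal{S}$ (to discard $-\mathcal{S}(x_T)$) and the boundedness of $\mathcal{S}$ on the compact set $K$ (to control $\mathcal{S}(x_0)$) are both used in an essential way.
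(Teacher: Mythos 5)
Your proof is correct. The paper does not actually write out a proof of this theorem (it defers to the cited references), but your argument -- rearranging the dissipation inequality, discarding $-\mathcal{S}(x_T)\leq 0$, bounding $\mathcal{S}(x_0)$ by the supremum of $\mathcal{S}$ over the compact set $K$, and then applying a Chebyshev estimate with the lower bound $\alpha(\varepsilon)>0$ on the exceedance set -- is exactly the standard argument those references use, and your constant $C_{K,\varepsilon}$ is indeed independent of $T$ and of $x_0,x_T\in K$ as required.
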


\begin{remark}\label{rem:cost-controllability}
    In~\cite{FauFO22}, \Cref{thm:diss->turn} is stated with a stronger assumption in the place of~\ref{as:i-diss-turn}, which can be interpreted as a controllability property.
    For the sake of simplicity, we do not consider this case.
\end{remark}

\section{Application to \portHamiltonian systems}\label{sec:app-ph}
Finally, we are ready to apply the previous results to \portHamiltonian systems and the optimal control problem~\eqref{ocp}.
First, recall that we can rewrite the cost functional $C_{\pH,T}(u)$ as
\begin{equation}\label{eq:cu}
    C_{\pH,T}(u) = \int_{0}^{T} y\tp u \dt = \mathcal{H}(x_T) - \mathcal{H}(x_0) + \int_{0}^{T}\norm{R(x)^{1/2} \eta(x)}^2 \dt,
\end{equation}
and that rearranging gives
\begin{equation}\label{eq:cu2}
    \mathcal{H}(x_T) - \mathcal{H}(x_0) = \int_{0}^{T}y\tp u - \norm{R(x)^{1/2}\eta(x)}^2 \dt.
\end{equation}
Equation~\eqref{cu} hints that any optimal trajectory will have to spend most of the time close to the set
\begin{align*}
    \MM
    := & \big\{ x\in\RR^n ~\big|~ R(x)^{1/2}\eta(x) = 0 \big\},
\end{align*}
and that $\mathcal{H}$ can be used as a storage function to derive dissipativity notions with respect to~$\MM$.
Our aim will be to formalize these ideas.
The first step will be to ensure that~$\MM$ has the necessary manifold structure.
For that, we make the following assumptions.

\begin{assumption}\label{as:ph}\hphantom{abc}
    \begin{enumerate}[({A}1)]
        \item \label{as:A1} The map $f: \RR^n \to \RR^n, ~ x\mapsto R(x)^{1/2} \eta(x)$ is of class $C^2$.
        \item \label{as:A2} The set $\MM$ is nonempty and there exists an open neighborhood $G\subseteq \RR^n$ of $\MM$ such that for all $x\in G$ it holds that $\dim(\ker(Df_x)) = s$, where the constant $s$ is independent of $x$ and satisfies $0 < s < n$.
        \item \label{as:A3} For each $x\in G$, the smallest nonzero singular value of $Df_x$ is bounded from below by a positive constant $\tilde{c} > 0$.
        \item \label{as:A4} Let $V$ be the open set from \Cref{lem:dist}. Any optimal trajectory $x^*$ of~\eqref{ocp} remains in $V$ for all times.
    \end{enumerate}
\end{assumption}

With assumptions \ref{as:A1} and \ref{as:A2}, \Cref{prop:manifolds} ensures that $\MM$ is an $s$-dimensional $C^{2,1}$ submanifold of $\RR^n$ and that $\MM \subseteq \mathcal{E}(\MM)$.

The next step is to show that the problem is dissipative.
As we will see shortly, this is ensured by assumptions \ref{as:A3} and \ref{as:A4}, which allow us to use \Cref{lem:dist} to conclude that the optimal control problem~\eqref{ocp} is dissipative with respect to the manifold~$\MM$.
Notice that \ref{as:A4} implies $x_0,x_T \in V$.

\begin{theorem}[\eqref{ocp} is dissipative]\label{thm:ph-dissip}
    Under \Cref{as:ph}, the optimal control problem~\eqref{ocp} is dissipative with respect to the manifold $\MM$ with storage function $\mathcal{H}$.
\end{theorem}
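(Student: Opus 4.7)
The plan is to prove the dissipation inequality from Definition \ref{def:dissip} by combining the energy balance equation~\eqref{cu2} with the pointwise distance estimate provided by \Cref{lem:dist}. I will take $\mathcal{S} = \mathcal{H}$ and construct $\alpha \in \mathcal{K}$ as a quadratic function, so the main work is just bookkeeping once the two earlier results are lined up.

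First I would check that \Cref{lem:dist} is applicable. Assumptions~\ref{as:A1}, \ref{as:A2}, and~\ref{as:A3} are precisely the hypotheses of \Cref{lem:dist} for the map $f(x) = R(x)^{1/2}\eta(x)$, so we obtain an open set $V$ with $\MM \subseteq V \subseteq \mathcal{E}(\MM)$ and a constant $c > 0$ such that $c\,\dist(x,\MM) \leq \norm{f(x)} = \norm{R(x)^{1/2}\eta(x)}$ for every $x \in V$. By assumption~\ref{as:A4}, an optimal trajectory $x^*$ stays in $V$ for all $t \in [0,T]$, so squaring gives the pointwise inequality
\begin{equation*}
    c^2 \dist(x^*(t),\MM)^2 \leq \norm{R(x^*(t))^{1/2}\eta(x^*(t))}^2 \quad \text{for all } t \in [0,T].
\end{equation*}

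Next I would plug this estimate into the energy balance equation~\eqref{cu2}, which applied to an optimal pair $(u^*, x^*)$ reads
\begin{equation*}
    \mathcal{H}(x_T) - \mathcal{H}(x_0) = \int_0^T y^{*\mathsf{T}} u^* \dt - \int_0^T \norm{R(x^*)^{1/2}\eta(x^*)}^2 \dt.
\end{equation*}
Subtracting the integrated pointwise bound from the right-hand side yields
\begin{equation*}
    \mathcal{H}(x_T) - \mathcal{H}(x_0) \leq \int_0^T y^{*\mathsf{T}} u^* - c^2\,\dist(x^*,\MM)^2 \dt,
\end{equation*}
which is exactly the dissipation inequality~\eqref{gen-dissip} with $\ell(x^*,u^*) = y^{*\mathsf{T}} u^*$ and $\alpha(r) := c^2 r^2$. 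The function $\alpha$ satisfies $\alpha(0) = 0$, is continuous, and is strictly increasing on $[0,\infty)$, so $\alpha \in \mathcal{K}$.

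Finally I would verify that $\mathcal{H}$ is an admissible storage function: the excerpt states that $\mathcal{H} \in C^1(\mathcal{X},\RR)$ and, without loss of generality, $\mathcal{H}$ is nonnegative along solutions of~\eqref{nlph}. Continuity of $\mathcal{H}$ immediately gives boundedness on compact sets, so $\mathcal{H}$ qualifies as a storage function in the sense of \Cref{def:dissip}. I do not anticipate a real obstacle here; essentially all the analytic difficulty is already absorbed into \Cref{lem:dist}, and the remaining work is to notice that~\ref{as:A4} lets us apply that lemma pointwise along optimal trajectories and that the square of a lower bound of the form $c\,\dist(x,\MM)$ furnishes a valid $\mathcal{K}$-function. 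The only minor care needed is keeping straight that~\eqref{gen-dissip} is demanded only along optimal trajectories, which is why assumption~\ref{as:A4} is imposed on $x^*$ rather than on arbitrary admissible trajectories.
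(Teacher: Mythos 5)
Your proposal is correct and follows essentially the same route as the paper: apply \Cref{lem:dist} under assumptions \ref{as:A1}--\ref{as:A3}, use \ref{as:A4} to make the distance estimate valid pointwise along optimal trajectories, substitute the squared bound into the energy balance~\eqref{cu2}, and take $\alpha(s) = c^2 s^2$ with $\mathcal{H}$ as the storage function. The closing remarks about boundedness of $\mathcal{H}$ on compact sets and the restriction of~\eqref{gen-dissip} to optimal trajectories also match the paper's treatment.
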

\begin{proof}
    The proof is essentially an application of \Cref{lem:dist}.
    Since all assumptions of \Cref{lem:dist} are satisfied under \Cref{as:ph}, there exists an open set $V\subseteq \RR^n$ and constant $c>0$ such that $\MM \subseteq V \subseteq \mathcal{E}(\MM)$ and
    \begin{equation}\label{eq:fdist-ph}
        c \dist(x,\MM) \leq \norm{f(x)} = \norm{R(x)^{1/2} \eta(x)}
    \end{equation}
    holds for all $x\in V$.
    In particular, assumption \ref{as:A4} ensures that the estimate holds along any optimal trajectory $x^*$ of~\eqref{ocp}.
    With this and~\eqref{cu2}, we see that for any optimal control $u^*$ and the associated trajectory $x^*$ and output $y^*$, we have
    \begin{align*}
        \mathcal{H}(x_T) - \mathcal{H}(x_0)
    & =    \int_{0}^{T}{y^*}\tp u^* - \norm{R(x^*)^{1/2}\eta(x^*)}^2 \dt \\
    & \leq \int_{0}^{T}{y^*}\tp u^* - c^2 \dist(x^*,\MM)^2 \dt \\
    & =    \int_{0}^{T}{y^*}\tp u^* - \alpha(\dist(x^*,\MM)) \dt,
    \end{align*}
    where $\alpha: s \mapsto c^2 s^2 \in \mathcal{K}$.
    Finally, note that the Hamiltonian $\mathcal{H}$, acting as a storage function here, is bounded on compact sets since it is differentiable.
\end{proof}

\begin{remark}\label{rem:ineq2}
    Let us emphasize that the estimate~\eqref{fdist} from \Cref{lem:dist} was the key to conclude the dissipativity property of~\eqref{ocp}.
    As we have mentioned in \Cref{rem:ineq}, the estimate is related to {\L}ojasiewicz' inequality~\cite{Loj59,JiKS92}.
    In fact, if the map $g: x \mapsto \norm{f(x)}^2$ is real analytic, then we may use the {\L}ojasiewicz inequality to derive dissipativity without~\Cref{lem:dist}, as long as any optimal trajectory stays in some compact set $D\subseteq \RR^n$.
\end{remark}

Now, an application of \Cref{thm:diss->turn} yields the following result, showing that the optimal control problem~\eqref{ocp} has the manifold turnpike property with respect to~$\MM$.

\vbox{
\begin{theorem}[\eqref{ocp} has a manifold turnpike]
    \label{thm:ph-turn}
    In addition to \Cref{as:ph}, assume
    \begin{enumerate}[({A}1)]
        \setcounter{enumi}{4}
        \item \label{as:mta1}there exists a control $u_1 \in \Uad$ that steers the associated trajectory $x_1$ from $x_0$ onto the manifold~$\MM$ in time $T_1 \geq 0$, and
        \item \label{as:mta2}there exists a control $u_2 \in \Uad$ that steers the associated trajectory $x_2$ from the manifold $\MM$ to $x_T$ in time $T_2 \geq 0$.
    \end{enumerate}
    Then the optimal control problem~\eqref{ocp} has a manifold turnpike at the manifold $\MM$.
\end{theorem}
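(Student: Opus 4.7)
The plan is to reduce to \Cref{thm:diss->turn}. Since \Cref{thm:ph-dissip} already supplies manifold dissipativity with storage function $\mathcal{H}$ and $\alpha(s) = c^2 s^2 \in \mathcal{K}$, it only remains to verify hypothesis~\ref{as:i-diss-turn} of \Cref{thm:diss->turn}: a constant $C_\ell(K) > 0$ such that $\int_0^T {y^*}\tp u^* \dt < C_\ell(K)$ for every optimal pair $(u^*,x^*)$ and every $T > 0$.

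By optimality, it suffices to exhibit, for each $T > 0$, an admissible control $\tilde u \in \Uad$ steering $x_0$ to $x_T$ on $[0,T]$ whose cost $C_{\pH,T}(\tilde u)$ is bounded independently of $T$. Using~\eqref{cu},
\begin{equation*}
    C_{\pH,T}(\tilde u) = \mathcal{H}(x_T) - \mathcal{H}(x_0) + \int_0^T \norm{R(\tilde x)^{1/2} \eta(\tilde x)}^2 \dt,
\end{equation*}
and since $K$ is compact and $\mathcal{H}$ is continuous, the boundary term is uniformly bounded. For $T \geq T_1 + T_2$, I would construct $\tilde u$ by concatenating the controls from \ref{as:mta1} and \ref{as:mta2}: apply $u_1$ on $[0,T_1]$ to reach a point of $\MM$, then a control keeping the trajectory on $\MM$ over $[T_1, T-T_2]$, and finally $u_2$ on $[T-T_2, T]$ to reach $x_T$. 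Because $R(x)^{1/2}\eta(x) = 0$ on $\MM$, the middle phase contributes nothing to the integral, whereas the first and third phases contribute fixed quantities depending only on $u_1, u_2, x_1, x_2$ over intervals of fixed length $T_1$ and $T_2$. For the remaining small horizons $0 < T < T_1 + T_2$, a separate compactness and continuity argument yields a bound. Optimality then gives $C_{\pH,T}(u^*) \leq C_{\pH,T}(\tilde u) \leq C_\ell(K)$, verifying hypothesis~\ref{as:i-diss-turn}.

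The main obstacle is the middle, ``on-manifold'' phase: \ref{as:mta1} and \ref{as:mta2} only guarantee that $x_1$ and $x_2$ touch $\MM$ at one endpoint, so additional structure is needed to join the terminal point of $x_1$ and the initial point of $x_2$ via a trajectory that stays on $\MM$ for an arbitrarily long intermediate interval. This gap can be closed by choosing $u_1$ and $u_2$ with matching endpoints at some $p \in \MM$ admitting a stationary input (an equilibrium-type condition on $\MM$), or by strengthening \ref{as:mta1}--\ref{as:mta2} to a form of exact controllability on $\MM$, in the spirit of \Cref{rem:cost-controllability}. Once hypothesis~\ref{as:i-diss-turn} is established, \Cref{thm:diss->turn} immediately yields the claimed manifold turnpike property at $\MM$.
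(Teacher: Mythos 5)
Your proposal takes essentially the same route as the paper: verify hypothesis~\ref{as:i-diss-turn} of \Cref{thm:diss->turn} by comparing the optimal cost against a concatenated control (reach $\MM$ via $u_1$, idle on $\MM$ at zero dissipation cost, leave via $u_2$), use~\eqref{cu} and compactness of $K$ to bound the result independently of $T$, and then invoke \Cref{thm:ph-dissip} together with \Cref{thm:diss->turn}. The gap you flag concerning the middle phase --- that \ref{as:mta1} and \ref{as:mta2} do not by themselves provide an admissible control holding the state on $\MM$ for an arbitrarily long intermediate interval with matching endpoints --- is not resolved in the paper either; its proof simply asserts that ``there is no control cost on the manifold'' and that the system is autonomous, so your observation is a fair critique of the published argument rather than a defect specific to your proposal.
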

}\vspace{-0.4cm}
\begin{proof}
    Since the \portHamiltonian system~\eqref{nlph} is autonomous and there is no control cost on the manifold $\MM$, the conditions \ref{as:mta1} and \ref{as:mta2} ensure that the total cost of the optimal control $u^*$ is bounded by
    \begin{align*}
        C_{\pH,T}(u^*)
    & = \int_{0}^{T}{y^*}\tp u^* \dt \\
    & = \mathcal{H}(x_T) - \mathcal{H}(x_0) + \int_{0}^{T}\norm{R(x^*)^{1/2} \eta(x^*)}^2 \dt \\
    & \leq C_{\mathcal{H}}(K) + \int_{0}^{T_1} \norm{R(x_1)^{1/2} \eta(x_1)}^2 \dt + \int_{0}^{T_2} \norm{R(x_2)^{1/2} \eta(x_2)}^2 \dt \\
    & \leq C_{\mathcal{H}}(K) + C_1(K) + C_2(K) < \infty.
    \end{align*}
    Notice that the constants $C_{\mathcal{H}}(K), ~C_1(K)$ and $C_2(K)$ are independent of the final time $T$.
    Thus, using the results of \Cref{thm:ph-dissip}, we may apply \Cref{thm:diss->turn} to conclude that the optimal control problem~\eqref{ocp} has the manifold turnpike property at $\MM$.
\end{proof}

In \Cref{thm:ph-turn}, in order to show that a turnpike property holds true, we needed to make the controllability assumptions~\ref{as:mta1} and~\ref{as:mta2}.
This is a common pattern in turnpike results, similar controllability assumptions are made in~\cite{CarHL91,FauFO22} and~\cite{SchPFWM21}.

\begin{remark}
    In~\cite{SchPFWM21}, the authors considered linear \pH systems of the form 
    \begin{align*}
        \dot{x} & = (J-R)Qx + Bu, \\
              y & = B\tp Q x,
    \end{align*}
    where $J=-J\tp$, $R=R\tp \succeq 0$ and $Q=Q\tp \succ 0$.
    They have shown that in this case the optimal control problem~\eqref{ocp} admits a subspace turnpike property with respect to $\ker(R^{1/2} Q)$.
    We can interpret \Cref{thm:ph-turn} as a generalization of this result to the nonlinear case.
    In the linear case, assumption \ref{as:A1} is immediately satisfied.
    Further, the set $\MM$ is the kernel of $R^{1/2} Q$, and if the dimension of $\ker(R^{1/2} Q)$ is $0<s<n$, then assumptions \ref{as:A2} and \ref{as:A3} are also satisfied.
    Since the distance estimate~\eqref{fdist-ph} can be shown to hold true globally~\cite[Lemma 13]{SchPFWM21}, the set $V$ from \Cref{lem:dist} is $V=\RR^n$ and thus assumption \ref{as:A4} is also satisfied.
\end{remark}

We finish this section with a simple example illustrating \Cref{thm:ph-turn}.

\begin{example}\label{ex:ph}
    Consider the functions $E,J,R,\eta$ and $B$ defined by
    \begin{gather*}
        E(x)   = \vec{ 1 & 0 \\ 0 & 1}, ~~
        J(x)   = \vec{ 0 & 1 \\ -1 & 0 }, ~~
        B(x)   = \vec{ 1 \\ 0 }, \\
        R(x)   = \vec{ \tfrac14 (4\norm{x}^2 + 1)^2 & 0 \\ 0 & 0}, ~~
        \eta(x)  = \vec{2 & 0 \\ 0 & 1}x
    \end{gather*}
    for all $x\in \RR^2$, which together with the Hamiltonian
    \begin{equation*}
        \mathcal{H}(x) = \frac12 ~x\tp\!\vec{2 & 0 \\ 0 & 1} x
    \end{equation*}
    form the \portHamiltonian system 
    \begin{equation}\label{eq:ph-ex-ode}
        \begin{aligned}
            E(x) \dot{x} = &~ \big(J(x) - R(x)\big)\eta(x) + B(x) u, \\
            y = &~ B(x)\tp \eta(x).
        \end{aligned}
        \tag{\textsf{pH-1}}
    \end{equation}
    For the system~\eqref{ph-ex-ode}, the function $f$ reads as 
    \begin{equation*}
        f: \RR^2 \to \RR^2,~ x \mapsto R(x)^{1/2} \eta(x) = \vec{4 \norm{x}^2 + 1 & 0 \\ 0 & 0}x = \vec{4(x_1^3 + x_2^2x_1) + x_1 \\ 0},
    \end{equation*}
    where we take $x=\vec{x_1 & x_2}\tp$.
    Thus, the derivative $Df$ reads as 
    \begin{equation*}
        Df(x) = \vec{ 12 x_1^2+ 4 x_2^2 + 1 & 8 x_2 x_1 \\ 0 & 0} \neq 0
    \end{equation*}
    and the subspace $\ker(Df(x))$ is one-dimensional for all $x\in\RR^2$.
    A simple calculation shows that the nonzero singular value of $Df(x)$ is given by 
    \begin{equation*}
        \sigma(x) = 144 x_1^4 + 160 x_1^2 x_2^2 + 24 x_1^2 + 16 x_2^4 + 8 x_2^2 + 1 \geq 1.
    \end{equation*}
    The zero locus $\MM$ of $f$ is given by 
    \begin{equation*}
        \MM = \bigg\{ \vec{x_1 \\ x_2} \in \RR^2 ~\bigg|~ (4 x_1^2 + 4x_2^2 + 1) x_1 = 0\bigg\} = \bigg\{ \vec{x_1 \\ x_2} \in \RR^2 ~\bigg|~ x_1 = 0\bigg\}.
    \end{equation*}
    Hence, assumptions \ref{as:A1}, \ref{as:A2} and \ref{as:A3} are satisfied for~\eqref{ph-ex-ode}.
    As $\MM$ is a linear subspace, the orthogonal projection on $\MM$ is well defined globally and we have $\mathcal{E}(\MM)=\RR^2$.
    Further, for the set $V$ from \Cref{lem:dist} it holds that $V=\RR^2$ since
    \begin{equation*}
        \dist(x,\MM) = |x_1| \leq | x_1 (4x_1^2 + 4x_2^2 + 1)| = \norm{f(x)}
    \end{equation*}
    for all $x\in \RR^2$.
    Thus, also assumption \ref{as:A4} is satisfied for~\eqref{ph-ex-ode}.

    Now, for $\xi = \vec{\xi_1 & \xi_2 & \xi_3}\tp \in \RR^3$, let us define the functions $\tilde{E},\tilde{J},\tilde{R},\tilde{\eta}$ and $\tilde{B}$ by
    \begin{gather*}
        \tilde{E}(\xi) := \vec{1 & 0 & 0 \\ 0 & 0 & 1 \\ 0 & 0 & 0}, ~~
        \tilde{J}(\xi) := \vec{0 & 1 & 0 \\ -1 & 0 & 0 \\ 0 & 0 & 0}, ~~
        \tilde{B}(\xi) := \vec{1 \\ 2 \\ 0}, \\
        \tilde{R}(\xi) := \vec{ \tfrac14 (4\xi_1^2 + 4\xi_2^2 + 1)^2 & 0 & 0 \\ 0 & 0 & 0 \\ 0 & 0 & 0},
        ~~
        \tilde{\eta}(\xi) := \vec{2\xi_1 \\ \xi_2 \\ \xi_3}.
    \end{gather*}
    It is easy to see that also the system 
    \begin{equation}\label{eq:ph-ex-dae}
        \begin{aligned}
            \tilde{E}(\xi) \dot{\xi} = &~ \big(\tilde{J}(\xi) - \tilde{R}(\xi)\big)\tilde{\eta}(\xi) + \tilde{B}(\xi) \tilde{u}, \\
            \tilde{y} = &~ \tilde{B}(\xi)\tp \tilde{\eta}(\xi)
        \end{aligned}
        \tag{\textsf{pH-2}}
    \end{equation}
    satisfies assumptions \ref{as:A1} -- \ref{as:A4}.
    The zero locus of the map $\tilde{f}: \xi \mapsto \tilde{R}(\xi)^{1/2}\tilde{\eta}(\xi)$ is 
    \begin{equation*}
        \tilde{\MM} = \left\{ \vec{\xi_1 & \xi_2 & \xi_3}\tp \in \RR^3 ~\middle|~ \xi_1 = 0 \right\}.
    \end{equation*}
\end{example}

\section{Numerical example}\label{sec:num}

As an example, we consider the optimal control problem~\eqref{ocp} together with the \pH systems~\eqref{ph-ex-ode} and~\eqref{ph-ex-dae} from \Cref{ex:ph}.
For the implementation, we use the open-source software package \texttt{CasADi}~\cite{AndGH18}.

In order to use \texttt{CasADi}, the optimal control problem~\eqref{ocp} is formulated as a minimization problem of the form 
\begin{equation}\label{eq:cs-nlp}
\begin{gathered}
    \min_{w} ~J(w) \\
    \text{subject to }~ w_{\text{lb}} \leq w \leq w_{\text{ub}} \und G(w) = 0.
\end{gathered}
\end{equation}
We follow a similar procedure as~\cite[Section 5.4]{AndGH18}.
In our implementation, $w$ contains the values $x(t_i)$ for the discretization points $t_i \in [0,T]$, and the values $u(t_i)$ for the discretization points $t_i \in [0,T], ~ i \neq 0$.
The initial condition and possible control constraints are incorporated in $w_{\text{lb}}$ and $w_{\text{ub}}$.
The function $G$ is used to enforce the final condition and a continuity condition on $x$ by using an integrator scheme to determine the value $x(t_{i+1})$ given the values $x(t_i)$ and $u(t_i)$.
This integrator scheme is also used to calculate the cost~$J$ via the \texttt{quad} option in \texttt{CasADi}'s \texttt{integrator} function.
For the solution of the nonlinear optimization problem~\eqref{cs-nlp}, \textsf{Ipopt}~\cite{WaeB06} is used.

In \Cref{fig:ocp-ex-ode}, the solution of the optimal control problem~\eqref{ocp} with the system~\eqref{ph-ex-ode} under the control constraint $-50 \leq u(t) \leq 50$ is shown. 
The turnpike behaviour is clearly visible; the first component $x_1$ of the optimal trajectory $x^*$ approaches the manifold $\MM = \{ x \in \RR^2 ~|~ x_1 = 0 \}$ very quickly and remains there for the majority of the time horizon.
The same observation can be made for larger time horizons, which is not shown in \Cref{fig:ocp-ex-ode}.

In \Cref{fig:ocp-ex-dae}, the solution of the optimal control problem~\eqref{ocp} with the system~\eqref{ph-ex-dae} under the control constraint $-200 \leq u(t) \leq 200$ is shown. 
Again, the turnpike phenomenon can be observed.

\begin{figure}[ht]
    \centering
    \input{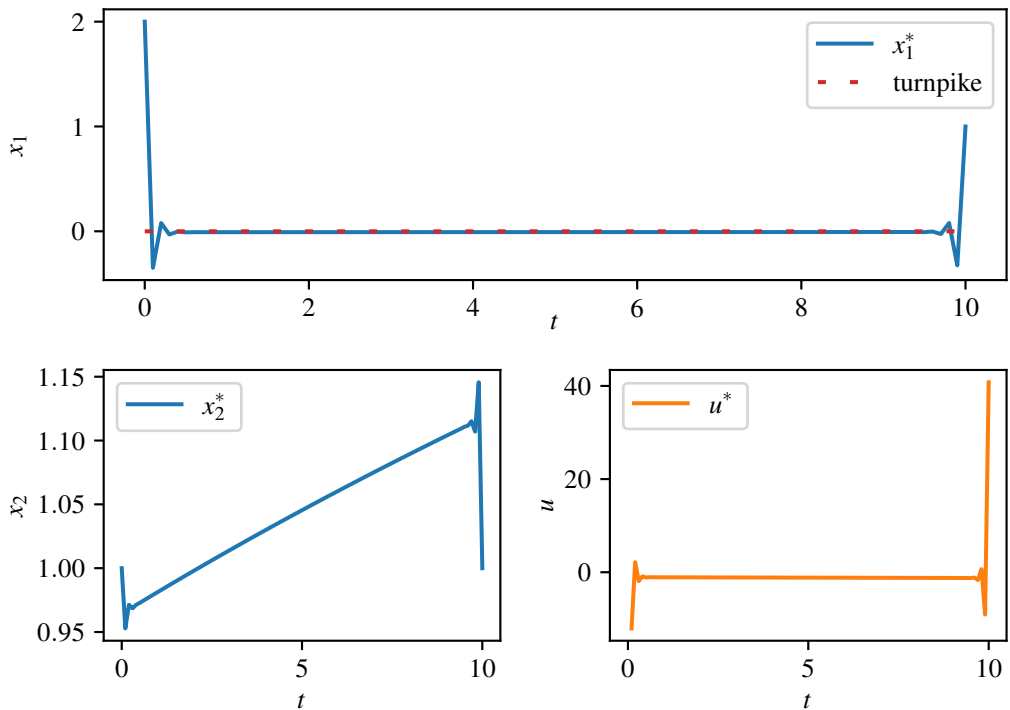}
    \vspace{-.7cm}
    \caption{Minimal energy control for~\eqref{ph-ex-ode}.
    As initial and final values $x_0 = [\begin{smallmatrix} 2 \\ 1 \end{smallmatrix}]$ and $x_T = [\begin{smallmatrix} 1 \\ 1 \end{smallmatrix}]$ are chosen.
    The considered time horizon is $[0,T]$ with final time $T = 10$, and the control is constrained via $-50\leq u(t) \leq 50$.
    We used $100$ discretization steps.}
    \label{fig:ocp-ex-ode}
\end{figure}

\begin{figure}[ht]
    \centering
    \input{./figures/example-dae.pgf}
    \vspace{-.7cm}
    \caption{Minimal energy control for~\eqref{ph-ex-dae}.
    As initial and final values $x_0 = [\begin{smallmatrix} 2 & 1 & 10 \end{smallmatrix}]\tp$ and $x_T = [\begin{smallmatrix} 1 & 2 & 20 \end{smallmatrix}]\tp$ are chosen.
    The considered time horizon is $[0,T]$ with final time \hbox{$T = 20$}, and the control is constrained via $-200\leq u(t) \leq 200$.
    We used $400$ discretization steps.}
    \label{fig:ocp-ex-dae}
\end{figure}

\section{Conclusion}\label{sec:conclusion}
In this paper, we have considered the optimal control of \portHamiltonian systems under minimal energy supply with fixed initial and final values.
We have seen that a map $f$, corresponding to the energy dissipating portion of the right hand side, and its zero locus $\MM = \{x ~|~ f(x) = 0 \}$, which corresponds to the dissipative part of the state space, play an important role.
It was shown that under smoothness assumptions on $f$, the set $\MM$ forms a $C^2$ submanifold of $\RR^n$.
In particular, using results from~\cite{LeoS21}, we observed that the orthogonal projection onto $\MM$ is well-defined in an open set $\mathcal{E}(\MM)$.
Further, we have shown that under these assumptions the distance of a point $x$ to $\MM$ can essentially be bounded by $\norm{f(x)}$ from above.
This fact allowed us to deduce that the considered optimal control problem is dissipative with respect to the manifold~$\MM$.
Our main result was a consequence of this dissipativity property.
Under additional controllability assumptions, we have seen that the problem has a manifold turnpike property with respect to $\MM$.
This theoretical observation was confirmed in a simple numerical example.

An open question from a theoretical perspective is the existence of optimal controls of~\eqref{ocp}.
Here, similar to~\cite{SchPFWM21,FauMP22,PhiSF21}, the particular structure of \pH systems should be exploited.
Another open topic is the study of stronger turnpike properties, such as exponential turnpikes~\cite{TreZZ18,GruG21}.
Applications of the theoretical results to specific \portHamiltonian systems such as gas networks~\cite{DomHL21} will be studied in future works.

\subsection*{Acknowledgements} 
\vbox{
The author thanks Tobias Breiten, Bernhard Höveler and Volker Mehrmann for their helpful comments on an early version of this manuscript.
Further, the author thanks the Deutsche Forschungsgemeinschaft for their support within the subproject B03 in the Son\-der\-for\-schungs\-be\-reich/Trans\-re\-gio 154 ``Mathematical Modelling, Simulation and Optimization using the Example of Gas Networks'' (Project 239904186).}

\bibliographystyle{siam}
\bibliography{../--literature/references}

\end{document}